\theoremstyle{theorem}
\theoremstyle{definition}
\newtheorem*{theorem*}{Theorem}
\begin{document}

\title{A new proof of the AM-GM-HM inequality}
\markright{Abbreviated Article Title}
\author{Konstantinos Gaitanas}

\maketitle
\begin{abstract}
In the current note, we present a new, short proof of the famous AM-GM-HM inequality using only induction and basic calculus.
\end{abstract}
\section{Introduction.}
Perhaps the most celebrated inequality is the AM-GM-HM inequality which states that if we let $\text{AM}=\frac{a_1+\ldots +a_n}{n}$, $\text{GM}=\sqrt[n]{a_1\cdots a_n}$, $\text{HM}=\frac{n}{\frac{1}{a_1}+\ldots +\frac{1}{a_n}}$, then $\text{AM}\ge \text{GM}\ge \text{HM}$ holds if $a_1, \ldots , a_n$ are positive real numbers. Several authors have provided some novel proofs of this inequality (see for example  \cite{Gwanyama}, \cite {Alzer} or \cite{Cauchy} ). The aim of this note is to present a simple new proof which does not seem to appear in the literature.
\section{The proof.}
\begin{theorem*}
For every $a_1, \ldots , a_n>0$, we have \\
\begin{align}\frac{a_1+\ldots +a_n}{n}\ge\sqrt[n]{a_1\cdots a_n}\ge \frac{n}{\frac{1}{a_1}+\ldots +\frac{1}{a_n}}\label{ineq} 
\end{align}
\end{theorem*}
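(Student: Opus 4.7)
My plan is to prove the left inequality (AM $\ge$ GM) by induction on $n$, using single-variable calculus in the inductive step, and then deduce the right inequality (GM $\ge$ HM) by applying the AM-GM part to the reciprocals $1/a_1, \ldots, 1/a_n$. This way the only non-trivial work is in establishing AM-GM.

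For the induction, the base case $n=1$ is trivial. Assuming AM-GM holds for $n-1$ positive reals, I would fix $a_1,\ldots,a_{n-1}>0$, write $G:=(a_1\cdots a_{n-1})^{1/(n-1)}$, and consider the one-variable function
\[
f(x) \;=\; \frac{a_1+\cdots+a_{n-1}+x}{n} \;-\; \bigl(a_1\cdots a_{n-1}\,x\bigr)^{1/n}, \qquad x>0.
\]
Differentiating gives $f'(x)=\tfrac{1}{n}-\tfrac{1}{n}(a_1\cdots a_{n-1})^{1/n}x^{-(n-1)/n}$, whose unique zero on $(0,\infty)$ is $x=G$; the sign of $f'$ flips from negative to positive there, so $x=G$ is the global minimum (and $f\to\infty$ at the endpoints confirms this). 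Evaluating, $(a_1\cdots a_{n-1}\cdot G)^{1/n}=(G^{n-1}\cdot G)^{1/n}=G$, so
\[
f(G) \;=\; \frac{a_1+\cdots+a_{n-1}-(n-1)G}{n},
\]
which is $\ge 0$ precisely by the inductive hypothesis applied to $a_1,\ldots,a_{n-1}$. Hence $f(x)\ge 0$ for all $x>0$, which is exactly AM-GM for $n$ variables (with $x=a_n$).

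The GM $\ge$ HM part then falls out by substituting $1/a_i$ into the just-proved AM-GM inequality and taking reciprocals, since all quantities are positive. I expect the main technical point to be the calculus verification that the critical point $x=G$ is truly a global minimum of $f$ on $(0,\infty)$ (checking the sign of $f'$ and the behavior at $0^+$ and $\infty$); once that is in place, the algebraic simplification $(G^{n-1}\cdot G)^{1/n}=G$ is what makes the inductive hypothesis plug in cleanly, and everything else is bookkeeping.
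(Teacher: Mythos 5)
Your proof is correct, but it takes a genuinely different route from the paper's in two respects. First, and most substantially, you obtain GM $\ge$ HM for free by applying the already-proved AM-GM inequality to the reciprocals $1/a_1,\ldots,1/a_n$ and inverting; the paper instead runs a second, independent inductive argument for this half, reducing it to the nonnegativity of $g(x)=(n-1)\ln(n-1)+n\ln(1+x)-(n-1)\ln x-n\ln n$ at its minimum $x=n-1$. Your duality argument is shorter and eliminates that second calculus computation entirely. Second, your inductive step for AM-GM is organized differently: the paper first invokes the induction hypothesis to bound the sum from below and thereby reduces to the single universal polynomial inequality $1+(n-1)x^n\ge nx^{n-1}$ (minimized at $x=1$), whereas you minimize the AM-GM defect over the last variable $x=a_n$, locate the minimum at $x=G=(a_1\cdots a_{n-1})^{1/(n-1)}$, and only then invoke the induction hypothesis to certify that the minimal value $\frac{a_1+\cdots+a_{n-1}-(n-1)G}{n}$ is nonnegative. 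Both variants are ``induction plus one-variable calculus,'' but yours localizes the calculus to the genuinely new variable. One small inaccuracy: $f$ does not tend to $\infty$ as $x\to 0^{+}$ (it tends to the finite positive value $\frac{a_1+\cdots+a_{n-1}}{n}$); this is harmless, since your sign analysis of $f'$ already shows $f$ decreases on $(0,G)$ and increases on $(G,\infty)$, which is all that is needed for global minimality.
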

\begin{proof} 
The theorem is obviously true for $n=1$. Suppose now that the theorem holds true for $n-1$, that is $\frac{a_1+\ldots +a_{n-1}}{n-1}\ge\sqrt[n-1]{a_1\cdots a_{n-1}}\ge \frac{n-1}{\frac{1}{a_1}+\ldots +\frac{1}{a_{n-1}}}$. We may write $a_1=a, a_2=b_1\cdot a, \ldots , a_n=b_{n-1}\cdot a$ for some appropriate choice of $b_1, \ldots, b_{n-1}>0$. Cancelling $a$ from all sides we can rewrite \eqref{ineq} in the form $\frac{1+b_1+\ldots+b_{n-1}}{n}\ge \sqrt[n]{b_1\cdots b_{n-1}}\ge \frac{n}{1+\frac{1}{b_1}+\ldots +\frac{1}{b_{n-1}}}$. We first prove the AM-GM inequality: From the induction hypothesis, (multiplying by $n-1$ and adding $1$ at both sides) $1+b_1+\ldots+b_{n-1}\ge 1+(n-1)\sqrt[n-1]{b_1\cdots b_{n-1}}$ holds true, so it suffices to show that $1+(n-1)\sqrt[n-1]{b_1\cdots b_{n-1}}\ge n\sqrt[n]{b_1\cdots b_{n-1}}$. We let $x=\sqrt[n(n-1)]{b_1\cdots b_{n-1}}$, so the last inequality can be written in the form $1+(n-1)x^n\ge nx^{n-1}$. The function $f(x)=1+(n-1)x^n-nx^{n-1}$ has a global minimum at $x=1$, the value $f(1)=0$, so $f(x)\ge 0$ and the inequality holds true for $n$. In order to prove the GM-HM inequality, we use the induction hypothesis (raising both sides to the $\frac{n-1}{n}$) so that $\sqrt[n]{b_1\cdots b_{n-1}}\ge (\frac{n-1}{\frac{1}{b_1}+\ldots +\frac{1}{b_{n-1}}})^{\frac{n-1}{n}}$ holds, and it will be enough to prove that \begin{align}(\frac{n-1}{\frac{1}{b_1}+\ldots +\frac{1}{b_{n-1}}})^{\frac{n-1}{n}}\ge \frac{n}{{1+\frac{1}{b_1}+\ldots +\frac{1}{b_{n-1}}}}\label{Ineq}\end{align} For this purpose, we let $x=\frac{1}{b_1}+\ldots +\frac{1}{b_{n-1}}$ and rewrite \eqref{Ineq} as $(\frac{n-1}{x})^{\frac{n-1}{n}}\ge\frac{n}{1+x}$ which taking logarithms of both sides is equivalent to $(n-1)\ln (n-1)+n\ln (1+x)-(n-1)\ln x-n\ln n\ge 0$. Letting $g(x)=(n-1)\ln (n-1)+n\ln (1+x)-(n-1)\ln x-n\ln n$, it is a routine matter to show that $g(x)$ has a global minimum at $x=n-1$, the value $g(n-1)=0$, so $g(x)\ge 0$ and the theorem is proved for $n$.\\
This completes the proof.
\end{proof}

\end{document}